\journal{Computers and Mathematics with Applications}
\begin{document}

\begin{frontmatter}

%\singlespacing
%\setcounter{tocdepth}{2}

%%%%%%%%%%% title

%% use the tnoteref command within \title for footnotes;
%% use the tnotetext command for the associated footnote;
%% use the fnref command within \author or \address for footnotes;
%% use the fntext command for the associated footnote;
%% use the corref command within \author for corresponding author footnotes;
%% use the cortext command for the associated footnote;
%% use the ead command for the email address,
%% and the form \ead[url] for the home page:
%%
%% \title{Title\tnoteref{label1}}
%% \tnotetext[label1]{}
%% \author{Name\corref{cor1}\fnref{label2}}
%% \ead{email address}
%% \ead[url]{home page}
%% \fntext[label2]{}
%% \cortext[cor1]{}
%% \address{Address\fnref{label3}}
%% \fntext[label3]{}

\title{
Subspace-preserving sparsification of matrices with minimal
perturbation to the near null-space. Part II: Approximation and Implementation
}

%% use optional labels to link authors explicitly to addresses:
%% \author[label1,label2]{<author name>}
%% \address[label1]{<address>}
%% \address[label2]{<address>}

\author{Chetan Jhurani}
\ead{chetan.jhurani@gmail.com}

\address{
	Tech-X Corporation\\
	5621 Arapahoe Ave\\
	Boulder, Colorado 80303, U.S.A.
}

%%%%%%%%%%% abstract

\begin{abstract}

This is the second of two papers to describe a matrix sparsification
algorithm that takes a general real or complex matrix as input and produces
a sparse output matrix of the same size.  The first paper~\cite{cite:ssa_1}
presented the original algorithm, its features, and theoretical results.

Since the output of this sparsification algorithm is a matrix rather than
a vector, it can be costly in memory and run-time if an implementation
does not exploit the structural properties of the algorithm and the matrix.
Here we show how to modify the original algorithm to increase its
efficiency. This is possible by computing an approximation to the exact result.
We introduce extra constraints that are automatically determined based on the input
matrix.  This addition reduces the number of unknown degrees of freedom but still
preserves many matrix subspaces.  We also describe our open-source library that
implements this sparsification algorithm and has interfaces in C++, C, and MATLAB.

\end{abstract}

\begin{keyword}

Sparsification \sep
Spectral equivalence \sep
Matrix structure \sep
Convex optimization \sep
Moore-Penrose pseudoinverse

\end{keyword}

\end{frontmatter}

%\tableofcontents
%\addtolength{\parskip}{\baselineskip}

%-------------------------------------------------------------------------------

\newcommand{\matspace}[3]{\ensuremath{\mathbb{#1}^{#2 \times #3}} }

\newcommand{\Rnn}  {\matspace{\mathbb{R}}{n}{n}}
\newcommand{\Rmn}  {\matspace{\mathbb{R}}{m}{n}}
\newcommand{\Rmm}  {\matspace{\mathbb{R}}{m}{m}}

\newcommand{\Cnn}  {\matspace{\mathbb{C}}{n}{n}}
\newcommand{\Cmn}  {\matspace{\mathbb{C}}{m}{n}}
\newcommand{\Cmm}  {\matspace{\mathbb{C}}{m}{m}}
\newcommand{\Crn}  {\matspace{\mathbb{C}}{r}{n}}
\newcommand{\Crr}  {\matspace{\mathbb{C}}{r}{r}}

%-------------------------------------------------------------------------------

\newcommand{\vecspace}[2]{\ensuremath{\mathbb{#1}^{#2}} }

\newcommand{\Rn}   {\vecspace{\mathbb{R}}{n}}
\newcommand{\Cn}   {\vecspace{\mathbb{C}}{n}}

\newcommand{\Rm}   {\vecspace{\mathbb{R}}{m}}
\newcommand{\Cm}   {\vecspace{\mathbb{C}}{m}}

\newcommand{\reals} {\ensuremath{\mathbb{R}}}
\newcommand{\complex} {\ensuremath{\mathbb{C}}}

\newcommand{\re} {\ensuremath{\operatorname{Re}}}
\newcommand{\im} {\ensuremath{\operatorname{Im}}}

%-------------------------------------------------------------------------------

\newcommand{\norm}[1]{\ensuremath{\left| \left| #1 \right| \right|} }

\newcommand{\abs}[1]{\ensuremath{\left| #1 \right|} }

\newcommand{\pinv}[1]{\ensuremath{{#1}^{\dagger}} }
\newcommand{\inv}[1]{\ensuremath{{#1}^{-1}} }
\newcommand{\h}[1]{\ensuremath{{#1}^{op}} }

\newcommand{\nullsp}[1]{\ensuremath{\mathcal{N}(#1)} }
\newcommand{\rangsp}[1]{\ensuremath{\mathcal{R}(#1)} }

\newcommand{\patsym}[0]{\ensuremath{\mathcal{Z}} }
\newcommand{\pat}[1]{\ensuremath{\mathcal{Z}(#1)} }

\newcommand{\binsym}[0]{\ensuremath{\mathcal{B}} }
\newcommand{\bin}[1]{\ensuremath{\mathcal{B}(#1)} }

\newcommand{\lagr}[0]{\ensuremath{\mathcal{L}} }

\newcommand{\partderiv}[2]{\ensuremath{\frac{\partial #1}{\partial #2}} }

\newcommand{\half}[0]{\ensuremath{\frac{1}{2}} }

%-------------------------------------------------------------------------------

\newcommand{\ordset}[3] {\ensuremath{\left\{ {#1}_{#2} \right\}_{#2 = 1}^{#3}}}

%-------------------------------------------------------------------------------

\newcommand{\Eq}[1]  {Equation~(\ref{#1})}
\newcommand{\Eqs}[1] {Equations~(\ref{#1})}
\newcommand{\Sec}[1] {Section~\ref{#1}}
\newcommand{\Fig}[1] {Figure~\ref{#1}}
\newcommand{\Alg}[1] {Algorithm~\ref{#1}}
\newcommand{\Rem}[1] {Remark~\ref{#1}}
\newcommand{\Prop}[1] {Property~\ref{#1}}
\newcommand{\Thm}[1] {Theorem~\ref{#1}}
\newcommand{\Def}[1] {Definition~\ref{#1}}
\newcommand{\Tab}[1] {Table~\ref{#1}}

%-------------------------------------------------------------------------------

\newtheorem{theorem}{Theorem}[section]
\newtheorem{lemma}[theorem]{Lemma}
\newtheorem{proposition}[theorem]{Proposition}
\newtheorem{corollary}[theorem]{Corollary}
\newtheorem{conjecture}[theorem]{Conjecture}
   
\theoremstyle{definition}
\newtheorem{definition}[theorem]{Definition}
\newtheorem{example}[theorem]{Example}
\newtheorem{examples}[theorem]{Examples}

\theoremstyle{remark}
\newtheorem{remark}[theorem]{Remark}

%\numberwithin{equation}{section}

%-------------------------------------------------------------------------------

\section{Introduction}

In the first paper of this series~\cite{cite:ssa_1}, we presented a
matrix-valued linearly constrained convex quadratic optimization problem.
The application was to compute a sparsified matrix from a dense input
matrix such that the sparsification process led to small perturbations
in the lower end of the spectrum and preserved the null-spaces and
some other important properties of input matrix.

Our first objective here is to describe an enhancement that leads to an
approximate solution using less computational time.  The second objective
is to describe our open-source library, named TxSSA, that implements the sparsification
algorithm and is usable from C++, C, and MATLAB.  We also add to the theoretical
and numerical results presented earlier.

We briefly mention the original optimization problem.  Given a matrix
$A \in \Cmn$, we want to compute a sparse $X \in \Cmn$, that
solves the following problem.
\begin{align}
\label{eq:min_J}
\min_{X} & \half \norm{(X - A) \pinv{A}}^2_F + \half \norm{\pinv{A} (X - A)}^2_F \text{ such that } \nonumber\\
&\nullsp{A} \subseteq \nullsp{X}, \nonumber\\
&\nullsp{A^*} \subseteq \nullsp{X^*}, \text{ and } \nonumber\\
&X \text{ has a specified sparsity pattern. }
\end{align}
The symbol $\dagger$ is for Moore-Pensrose pseudoinverse, $*$ is for
Hermitian transpose, and $\mathcal{N}$ denotes the null-space of a matrix.
Typically the sparsity pattern is computed from $A$ but could be specified
separately.  In the first paper, we described an $L_p$ norm based algorithm
for sparsity pattern determination that works on individual rows and columns
to preserves entries with large magnitude.

Here is an outline of this paper. In Section 2, we describe some computational
issues with the optimization problem. In Section 3, we describe a solution to rectify
these issues, which is an algorithm to specify extra constraints in the optimization problem.
In Section 4, we deal with existence and uniqueness issues. In Section 5, we show
that specification of extra constraints still preserves useful matrix properties.
We give details of the numerical techniques we use in Section 6.  In Section 7,
we describe our implementation of the sparsification algorithm. Finally, in Section 8,
we show some new numerical results related to the enhancements we propose in this
paper.

\section{Computational issues with sparsification}

The sparsification problem discussed in~\cite{cite:ssa_1} and in~\Eq{eq:min_J} is mathematically simple because it
is a linearly constrained quadratic convex minimization problem.
The inputs are the Moore-Penrose pseudoinverse of input matrix $A$ and
the desired sparsity pattern $\pat{X}$.  We have algorithms to compute them.
However, the optimization problem is computationally expensive.  There are multiple reasons for this.
\begin{enumerate}
  \item The number of unknowns, which is equal to number of non-zeros in
  the sparsity pattern, can be significantly larger than $m$ or $n$, the
  input and output matrix sizes.
  \item The conditioning of the Hessian can be significantly worse compared
  to the conditioning of the input matrix.  This is evident by looking at the
  eigenvalue decomposition of the Kronecker sum Hessian, as shown
  in~\cite[Section 2.6]{cite:ssa_1}.
  %REF
  \item The graph corresponding to the Hessian does not necessarily have
  the structure required for sparse direct solvers to be efficient.
  Typically they lead to large fill-in even after fill-in reducing permutations
  are applied.
\end{enumerate}
See~\Sec{sec:numres} for a numerical example showing these phenomena.
The minimization problem is still solvable but the cost grows rapidly
with matrix size.  Our goal is to control the minimization cost.

Our solution, described ahead in detail, is to reduce the number of
unknowns by imposing simple linear equality constraints. 
The size of the reduced Hessian is then small enough to be
factored by a dense solver.  Although this changes the original problem and so we do
not compute the {\em exact} solution.  We will show in~\Sec{sec:numres} that the effect of such
an approximation is minor.

We call this process, using which we reduce the number of unknowns,
binning.  It is a mathematically well-defined process and requires an input parameter
for number of {\em bins}.
However, the results are not too sensitive once the parameter is sufficiently large.  By reducing system size, binning typically improves the conditioning
of the optimization problem so that is one less issue to worry about too.
In an earlier paper of ours~\cite{cite:cj_tma_bj_jcp}, we had not done any binning
and had to resort to iterative solvers and early termination to manage
the speed of our algorithm.  Similarly, in~\cite{AusBrez}, which was the
original work for such a sparsification algorithm, black-box dense matrix solvers were
used to invert all matrices. Simply speaking, the focus was on prototyping the features of the algorithm
rather than the solver.  These limitations are now removed as well.

\section{Binning the unknown entries}
\label{sec:binning}

The concept of binning the unknowns is simple. It is based on the numerical
evidence that when the {\em exact} minimization problem is solved, the
matrix locations that are nearly equal in value in the input matrix $A$
remain nearly equal in value in the output matrix $X$ (when present in the
sparsity pattern). This holds for entries belonging to
different rows and columns.  See~\cite[Section 6]{cite:ssa_1} for an example. %REF

We use this observation to enforce extra
linear equality constraints {\em a priori} between the entries of $X$ if
the corresponding entries in $A$ are nearly equal. Since these are very
simple constraints, it is easy to eliminate one unknown and reduce the
problem size by one (per constraint).  Of course, multiple entries of $A$
can be nearly equal in which case multiple corresponding unknown $X$ entries are
constrained to be in one {\em bin}.  Hence the name of the process.  We
typically reduce the problem size so that it contains a few hundred bins,
for arbitrary sparsity patterns.  Once the reduced Hessian is formed, such
a problem can be solved very quickly.

The main question is whether enforcing so many extra constraints via
binning will lead to a matrix $X$ which has suitable spectral properties
and is subspace-preserving.  This is a topic of other theoretical sections ahead and
some numerical results are presented in in~\Sec{sec:numres}.  It is
seen that the constraints due to binning conflict with the constraints due
to null-spaces in general (when the left or right null-spaces are
non-trivial).  We show how to avoid this issue and still get a suitable
matrix $X$.  First, we present a detailed description of the binning
algorithm ignoring its effects on optimization.

\subsection{A binning algorithm}

The input to the binning algorithm is the matrix $A \in \Rmn$ or $A \in
\Cmn$, the chosen sparsity pattern $\pat{A} \in \Rmn$, and the maximum number
of bins $N_{bin}$ per real or imaginary part.  If the matrices are complex,
the sparsity pattern is the same for both, real and imaginary, parts.
This is by design, see~\cite[Remark 3.10]{cite:ssa_1}.  But %REF
we keep $N_{bin}$ bins for the real part and $N_{bin}$ bins for the
imaginary part and they are binned separately.

The output of the binning algorithm
is one bin identifier corresponding to each non-zero in $\pat{A}$, per real
and imaginary part.  A bin identifier is a natural number.   Unknowns with the same bin identifier will be
constrained to be equal in the optimization process.  For complex matrices,
bin identifiers for real and imaginary parts don't overlap.  Essentially,
we choose the smallest bin identifier for the imaginary part to be one
greater than the largest bin identifier for the real part.

Let $\bin{A}$ denote the bin identifier matrix, which is of the same size as $A$.
It is real if $A$ is real and complex if $A$ is complex. All entries of
\bin{A} are non-negative integers.  Our convention is that $(\bin{A})_{ij} = 0$ if
$(\pat{A})_{ij} = 0$, in which case it is not a bin identifier.

Here is a basic binning algorithm in detail.
First we take all the entries of $A$ whose corresponding entry in $\pat{A}$ is one
and compute their minimum and maximum value.  This gives use the range.
The range is equally divided to form $N_{bin}$ bins and each entry
is then placed in a bin.  When all entries are binned, some bins may
remain empty and no identifier is assigned to them. The non-empty bins
are given an identifier sequentially.  All the matrix locations that
fall in the same bin will be constrained to be equal while optimizing.

As a simple example, a bin identifier can be computed by a simple formula
as follows.  Here $v$ is the real value for which an identifier is to
be computed, $\min$ is the minimum matrix value, $\max$ is the maximum
matrix value, and $h$ is $\frac{\max - \min}{N_{bin}}$.
$$
id = \lfloor (v - \min) h \rfloor + 1
$$
We compute such identifiers separately for the positive and negative ranges
in the input matrix.

Here is a concrete example using a $3\times 4$ real matrix that shows the process of
computing the sparsity pattern and bin identifiers.
\begin{verbatim}
    A =  5     4     1    -5
        -5     8    -7     7
         0     9    -7    -5

    A_pat = full(p_norm_sparsity_matrix(A, 0.6, 1));
         1     0     0     1
         1     1     1     1
         0     1     1     1

    A_id = full(bin_sparse_matrix(A, A_pat, 8))
         1     0     0     2
         2     3     2     3
         0     3     2     2
\end{verbatim}
For this example, the sparsity ratio is 0.6, $p$ for $L_p$ norm is 1, and
maximum number of bins is 8.  The function names and
arguments correspond to the MATLAB implementation (\Sec{sec:matlab}).
Note that $N_{bin}$ refers to maximum number of bins and the actual number
of bins can be less than that, three in this example.

The binning process can also be visually described using the MATLAB
command {\tt sortrows}.  We sort {\tt A} and {\tt A\_id} values together
after vectorization.
\begin{verbatim}
  B = sortrows([A(:) A_id(:)])'
  
   -7 -6 -5 -5 -5  0  1  4  5  7  8  9
    2  2  2  2  2  0  0  0  1  3  3  3
\end{verbatim}
This clearly shows that values close to each other are
assigned equal bins and that bins for values
not in the sparsity pattern are 0.

The binning algorithm is designed to satisfy some important
properties.  They are listed below without proofs. To simplify the discussion ahead, we define the notion of equivalence of bin identifier matrices.
The letters $i,j,k,$ and $l$ are indices within appropriate ranges.
\begin{definition}
Two real bin identifier matrices $\binsym^1$ and $\binsym^2$ are equivalent,
denoted by $\binsym^1 \sim \binsym^2$, if and only if they are of the same
size and $(\binsym^1)_{ij} = (\binsym^1)_{kl}$ whenever
$(\binsym^2)_{ij} = (\binsym^2)_{kl}$. For complex bin identifier matrices,
the equivalence holds if the real and imaginary parts are equivalent individually.
\end{definition}
We use the notion of equivalence rather than true equality because we
do not care about permutation of bin identifiers.  We only care about whether
they lead to the same subspace when all binning related constraints are imposed.

We now state a few important properties related to binning using conventions mentioned
above and the definition of equivalence.  The proofs are elementary and we skip them.
The matrix $A$ can be be real or complex.
\begin{enumerate}[{P}-1]
  \item $A_{ij} = 0 \implies (\bin{A})_{ij} = 0$.
  \item $\bin{\alpha A} \sim \bin{A}$ for $\alpha \in \reals \setminus \{0\}$. \label{prop:bin_homogeneous}
  \item $\bin{A^T} \sim (\bin{A})^T$.
  \item $\bin{A^*} \sim (\bin{A})^T$.
\end{enumerate}

\subsection{Interaction of binning and null-space constraints}

In general, one must be careful when introducing constraints when solving an
optimization problem.  The feasible region may become empty or trivial.
This can be the case when binning related equality constraints interact
with the null-space related equality constraints.  Here is an example.

Consider a general real square input matrix of size 1000 and rank 999.  Its
entries then satisfy 1000 equality constraints each due to left and right
null-spaces.  If we keep only 500 bins (and thus 500 unknowns) for the
output matrix, then the output cannot satisfy all the null-space
constraints unless all the entries are 0.  We have more homogeneous
equality constraints than the number of unknowns. Note that this is an
argument for a general situation.  One can create a special matrix and a
special sparsity pattern where the feasible region is non-trivial after binning.
In any case, even if the feasible region is non-trivial, we may be left with too
few degrees of freedom to have a minimization problem that really reduces the misfit we
are trying to minimize.

To avoid this issue but still allow binning to reduce computational
cost, we separate the imposition of binning constraint and null-space
constraints.  We solve two minimization problems whose details are provided
next.

\subsection{Optimization in two steps}

We first show the one-step {\em exact} optimization problem,
which was introduced in the first part in this series, for ease of comparison.
\begin{equation}
\begin{array}{l}
\displaystyle \min_X J(X;A) := \half \norm{(X - A) \pinv{A}}^2_F + \half \norm{\pinv{A} (X - A)}^2_F \text{ such that }\\[10pt]
\displaystyle \qquad \nullsp{A} \subseteq \nullsp{X}, \nullsp{A^*} \subseteq \nullsp{X^*}, \text{ and } X_{ij} = 0 \text{ if } (\pat{A})_{ij} = 0.
\end{array}
\end{equation}

Next we give the problem statements for the two optimization
problems supposed to use binning to approximate the output of the one-step
problem.  The first problem, computes a matrix $Y$ on which only sparsity and binning
constraints are imposed.  The second problem, uses $Y$ to compute
a matrix $X$ on which only sparsity and null-space
constraints are imposed.  The misfit functionals are different
on both.  Since we bin the real and imaginary parts separately,
we need to describe the optimization problem for real and complex
matrices slightly differently.

In the real case, $A,Y,X,\bin{A} \in \Rmn$, and we solve the following problem.
\begin{equation}
\label{eq:binreal}
\begin{array}{l}
\displaystyle \min_Y J(Y;A) := \half \norm{(Y - A) \pinv{A}}^2_F + \half \norm{\pinv{A} (Y - A)}^2_F \text{ such that }\\[10pt]
\displaystyle \qquad Y_{ij} = 0 \text{ if } (\pat{A})_{ij} = 0,\\[5pt]
\displaystyle \qquad Y_{ij} = Y_{kl} \text{ if } (\bin{A})_{ij} = (\bin{A})_{kl}.
\end{array}
\end{equation}

In the complex case, $A,Y,X,(\bin{A}) \in \Cmn$, and we solve the following problem.
\begin{equation}
\label{eq:bincomplex}
\begin{array}{l}
\displaystyle \min_Y J(Y;A) := \half \norm{(Y - A) \pinv{A}}^2_F + \half \norm{\pinv{A} (Y - A)}^2_F \text{ such that }\\[10pt]
\displaystyle \qquad Y_{ij} = 0 \text{ if } (\pat{A})_{ij} = 0,\\[5pt]
\displaystyle \qquad \re(Y_{ij}) = \re(Y_{kl}) \text{ if } \re((\bin{A})_{ij}) = \re((\bin{A})_{kl}), \text{ and }\\[5pt]
\displaystyle \qquad \im(Y_{ij}) = \im(Y_{kl}) \text{ if } \im((\bin{A})_{ij}) = \im((\bin{A})_{kl}).
\end{array}
\end{equation}
As is seen, only the binning constraints are different.  The main reason is that
binning is done on the real line, where values can be linearly ordered quite naturally,
and not in the complex plane.

The second problem imposes the null-space while maintaining the sparsity
imposed in the first problem.
\begin{equation}
\label{eq:nullimpose}
\begin{array}{l}
\displaystyle \min_X J_{null}(X;Y) := \half \norm{X - Y}^2_F \text{ such that }\\[10pt]
\displaystyle \qquad \nullsp{A} \subseteq \nullsp{X}, \nullsp{A^*} \subseteq \nullsp{X^*}, \text{ and } X_{ij} = 0 \text{ if } (\pat{A})_{ij} = 0.
\end{array}
\end{equation}
If we were to not use the Frobenius norm in the second
problem and use the original $\pinv{A}$ based misfit $J$ instead,
then the second problem would be equivalent to the non-binned problem
and expensive to solve.
Since the Hessian of the second problem is identity, it is much easier
to solve.

The main assumption behind this design is
that if the near null-space is `close' to the
null-space, then perturbing the near null-space little (in the first problem) would
lead to only a small perturbation to the null-space.
After
a matrix is found which does not satisfy the null-space
constraints, we just perturb the entries a little
so that it does satisfy them and the near null-space
is not perturbed a lot.  Of course, the assumption
of closeness of near and {\em actual} null-spaces is not true for all rank-deficient matrices.
But for problems that are discretizations of an infinite dimensional problem,
this is likely to be true.

\section{Existence and uniqueness in the two optimization steps}

We show that the two optimization problems always have solutions.  The
solution may not be unique for the first one when the input matrix
is rank-deficient in some atypical cases that don't arise in practice.
The second problem always has a unique solution.

\begin{lemma}
\label{lemma:bin_existence}
A minimizer always exists for the binning based minimization problems
posed in~\Eq{eq:binreal} and~\Eq{eq:bincomplex} for arbitrary imposed sparsity
and binning patterns.
\end{lemma}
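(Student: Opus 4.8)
The plan is to recognize that, once the linear equality constraints are eliminated, both binning problems become ordinary finite-dimensional linear least-squares problems, and then to invoke the fact that such problems always attain their minimum. The first observation is that every constraint in \Eq{eq:binreal} and \Eq{eq:bincomplex} is a \emph{homogeneous} linear equality: the sparsity conditions $Y_{ij}=0$ and the binning conditions $Y_{ij}=Y_{kl}$ (or their real/imaginary analogues) all have the form $\ell(Y)=0$ for linear functionals $\ell$. Hence the feasible set is a linear subspace of \Rmn{} (respectively a real-linear subspace of \Cmn), and in particular it is nonempty because the zero matrix satisfies every constraint. This already disposes of the worry that the feasible region could be empty.

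Next I would make the least-squares structure of the objective explicit via vectorization. Writing $w=\mathrm{vec}(Y)$ and using the Kronecker identities $\mathrm{vec}((Y-A)\pinv{A})=((\pinv{A})^T\otimes I)\,\mathrm{vec}(Y-A)$ and $\mathrm{vec}(\pinv{A}(Y-A))=(I\otimes \pinv{A})\,\mathrm{vec}(Y-A)$, the functional becomes
\begin{equation*}
J(Y;A)=\half\norm{K\bigl(w-\mathrm{vec}(A)\bigr)}^2,\qquad
K=\begin{bmatrix}(\pinv{A})^T\otimes I\\ I\otimes \pinv{A}\end{bmatrix}.
\end{equation*}
I would then parametrize the feasible subspace by its bin values: let $z$ collect one unknown per nonempty bin (for the complex case, one per real bin and one per imaginary bin), and let $B$ be the $0$/$1$ matrix with $w=Bz$ that expands bin values back to matrix entries while forcing the sparsity zeros. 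Substituting yields the reduced problem $\min_z \half\norm{(KB)z-K\,\mathrm{vec}(A)}^2$, a standard linear least-squares problem over a finite-dimensional space.

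The conclusion then follows from the universal solvability of linear least squares: the normal equations $(KB)^*(KB)\,z=(KB)^*K\,\mathrm{vec}(A)$ are always consistent, because the right-hand side lies in $\rangsp{(KB)^*}=\rangsp{(KB)^*(KB)}$, so a minimizer $z^\star$ exists and the matrix $Y^\star$ defined by $\mathrm{vec}(Y^\star)=Bz^\star$ minimizes the original problem. Equivalently, one may appeal to the Frank--Wolfe theorem that a convex quadratic bounded below on a nonempty polyhedron attains its infimum; here the reduced Hessian $(KB)^*(KB)$ is positive semidefinite and $J\ge 0$ supplies the lower bound. For the complex problem in \Eq{eq:bincomplex} I would first identify \Cmn{} with $\reals^{2mn}$, since the real and imaginary parts are binned separately and the constraint set is therefore only real-linear; the same least-squares argument then applies verbatim to the resulting real quadratic.

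I expect the main obstacle to be technical rather than conceptual. The reduced Hessian $(KB)^*(KB)$ may be singular---this occurs precisely when $A$ is rank-deficient, since then $\pinv{A}$, and hence $K$, has a nontrivial kernel---so strict convexity and the usual compact-sublevel-set argument are unavailable and the minimizer need not be unique. Care is thus needed to establish existence through consistency of the (possibly rank-deficient) normal equations rather than through coercivity, and to set up the vectorization and the bin-expansion matrix $B$ correctly, in particular the bookkeeping that prevents the real and imaginary binning constraints from interacting in the complex case.
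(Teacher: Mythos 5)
Your proposal is correct, and its skeleton matches the paper's proof: both arguments start from the observation that the sparsity and binning constraints are homogeneous linear equalities, so the feasible set is a nonempty linear subspace containing the zero matrix, and both conclude by attainment of the minimum of a convex quadratic on that subspace. The difference lies in how that final step is justified. The paper simply notes convexity of the quadratic form (citing Section 2.5.2 of the first paper) and appeals to a convex-optimization reference for existence; you instead make the step self-contained by vectorizing the objective with Kronecker identities, eliminating the constraints through the bin-expansion matrix $B$, and reducing to the finite-dimensional linear least-squares problem $\min_z \half \norm{(KB)z - K\,\mathrm{vec}(A)}^2$, whose normal equations are always consistent because the right-hand side lies in $\rangsp{(KB)^*} = \rangsp{(KB)^*(KB)}$. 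This buys something real: convexity alone does not imply attainment (a convex function bounded below need not achieve its infimum), so the paper's one-line justification leans entirely on the cited theorem about quadratics, whereas your argument exhibits a minimizer directly and explains why existence survives exactly in the delicate case the paper later discusses informally, namely rank-deficient $A$, where $K$ has a nontrivial kernel and coercivity fails. Your treatment of the complex problem---identifying \Cmn with $\reals^{2mn}$ because the separate real/imaginary binning makes the constraints only real-linear---is likewise a detail the paper leaves implicit.
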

\begin{proof}
The equality constraints for sparsity and binning are linear and
homogeneous.  Thus, the set of feasible solutions is non-empty.  For
example, the zero matrix is a feasible element.  Since the quadratic form
is convex for all $X$ (see Section 2.5.2 in~\cite{cite:ssa_1}), a minimizer %REF
always exists~\cite{cite:convex_opt}.
\end{proof}

We now prove that there is a globally unique minimizer for a full-rank
input matrix.
\begin{lemma}
\label{lemma:bin_unique}
If $A$ is full-rank, the binning based minimization problems posed 
in~\Eq{eq:binreal} and~\Eq{eq:bincomplex} have
globally unique minimizers for arbitrary imposed sparsity and binning
patterns.
\end{lemma}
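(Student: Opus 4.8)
The plan is to upgrade the mere convexity used in Lemma~\ref{lemma:bin_existence} to \emph{strict} convexity of the misfit $J(\,\cdot\,;A)$ over the entire matrix space, and then combine this with the existence already established. Since the feasible sets of~\Eq{eq:binreal} and~\Eq{eq:bincomplex} are nonempty linear subspaces (the sparsity and binning constraints are homogeneous and linear), a strictly convex $J$ restricted to such a subspace still has at most one minimizer; together with Lemma~\ref{lemma:bin_existence} this forces the minimizer to be unique. Crucially, this reasoning is insensitive to \emph{which} subspace the patterns select, which is exactly what lets us handle arbitrary imposed sparsity and binning patterns in one stroke.

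First I would write $Z = Y - A$ and isolate the quadratic part $Q(Z) := \half \norm{Z \pinv{A}}^2_F + \half \norm{\pinv{A} Z}^2_F$, whose vectorized Hessian is the Kronecker-sum form recorded in Section~2.6 of~\cite{cite:ssa_1}: a sum of the two positive semidefinite Gram matrices arising from the linear maps $Z \mapsto Z \pinv{A}$ and $Z \mapsto \pinv{A} Z$. Because this Hessian is positive semidefinite for every $A$, strict convexity of $J$ is equivalent to the implication $Q(Z) = 0 \Rightarrow Z = 0$, i.e.\ to the Hessian being positive definite, and it is this implication I would establish.

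The key step is to verify the implication when $A$ has full rank, and the one point that needs care is that the two terms of $Q$ play asymmetric roles for rectangular $A$. If $Q(Z) = 0$ then both $Z \pinv{A} = 0$ and $\pinv{A} Z = 0$. When $A$ has full column rank, $\pinv{A}$ has full row rank, so the map $Z \mapsto Z \pinv{A}$ is injective and $Z \pinv{A} = 0$ already gives $Z = 0$; equivalently $\pinv{A} (\pinv{A})^*$ is positive definite, so the first Kronecker term alone is positive definite. When $A$ has full row rank, $\pinv{A}$ has full column rank, the map $Z \mapsto \pinv{A} Z$ is injective, $(\pinv{A})^* \pinv{A}$ is positive definite, and the second Kronecker term alone is positive definite. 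Since a full-rank $A$ always has full column or full row rank (and both when square), in every case $Q(Z) = 0$ forces $Z = 0$, so $J(\,\cdot\,;A)$ is strictly convex. I expect this asymmetry to be the main obstacle: for a tall or wide $A$ only \emph{one} of the two terms supplies definiteness while the other is genuinely rank-deficient, so there is no single uniform formula and one must split on the shape of $A$.

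Finally I would conclude with the standard argument. If $Y_1 \neq Y_2$ were two feasible minimizers, their midpoint is feasible because the feasible set is convex, and strict convexity gives $J(\half(Y_1 + Y_2);A) < \half J(Y_1;A) + \half J(Y_2;A)$, contradicting minimality; hence the minimizer is unique, and globally so since $J$ is convex. The real and complex problems differ only in their constraint subspaces --- binning per real and imaginary part in~\Eq{eq:bincomplex} versus joint binning in~\Eq{eq:binreal} --- and since strict convexity of $J$ is a property of the ambient space that is independent of those constraints, the same conclusion covers both cases.
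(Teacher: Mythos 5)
Your proof is correct and follows essentially the same route as the paper: strict convexity of the quadratic form on the whole matrix space when $A$ is full-rank, restricted to the non-empty convex feasible subspace cut out by the sparsity and binning constraints, yields global uniqueness. The only difference is that the paper simply cites this strict convexity from Section~2.5.2 of~\cite{cite:ssa_1}, whereas you supply the underlying argument yourself (via the Kronecker-sum Hessian and the full column/row rank case split), which is a sound and complete way of filling in that cited fact.
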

\begin{proof}
As mentioned in Section 2.5.2 in~\cite{cite:ssa_1}, the quadratic form is %REF
strictly convex on $\Cmn$ if $A$ is full-rank.  In particular, it is
strictly convex on the subspace of those matrices that satisfy the sparsity
and binning constraints.  Since the feasible set is convex and non-empty,
the minimizer is globally unique.
\end{proof}

The solution may not be unique if the following three disparate conditions hold --
the input matrix $A$ is rank-deficient, the number of bins is too large,
and the number of sparsity constraints is too few. In such cases, the
Hessian corresponding to the quadratic form is rank-deficient on the space
of all matrices in $\Cmn$ and the constraints may not be sufficient to
remove this deficiency for the reduced Hessian.  This is a case of little
practical interest because we will typically have sufficient sparsity and
the number of bins will be finite and not too large.  Both these situations
make the reduced Hessian full-rank leading to a globally unique minimizer.

We show that second optimization problem with null-space constraints and a
simpler objective functional has a solution that exists and is unique.

\begin{lemma}
\label{lemma:null_unique_exist}
A globally unique minimizer exists for the null-space imposing minimization problem posed
in~\Eq{eq:nullimpose} for arbitrary imposed sparsity pattern and null-space
constraints.
\end{lemma}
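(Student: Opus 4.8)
The plan is to recognize the feasible region as a linear subspace and the objective as a squared distance, so that the minimizer is simply an orthogonal projection, with existence and uniqueness following from the projection theorem.

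First I would verify that all three constraints in \Eq{eq:nullimpose} are linear and homogeneous in the entries of $X$. The sparsity constraint $X_{ij} = 0$ for $(\pat{A})_{ij} = 0$ is plainly linear and homogeneous. For the null-space conditions, choose matrices $V$ and $W$ whose columns form bases of \nullsp{A} and \nullsp{A^*}, respectively. Then $\nullsp{A} \subseteq \nullsp{X}$ is equivalent to $XV = 0$, and $\nullsp{A^*} \subseteq \nullsp{X^*}$ is equivalent to $W^* X = 0$. Each is a finite system of linear homogeneous equations in the entries of $X$. Hence the feasible set $\mathcal{F}$ is the intersection of the kernels of finitely many linear functionals on \Cmn\ (or \Rmn), so it is a linear subspace — in particular a non-empty (it contains the zero matrix), closed, and convex set.

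Next I would observe that $J_{null}(X;Y) = \half \norm{X - Y}^2_F$ is, up to the factor $\half$, the squared Frobenius distance from $X$ to the fixed matrix $Y$ in the inner-product space \Cmn. This functional is strictly convex — its Hessian is the identity — and coercive, \emph{independent} of any rank assumption on $A$. This is precisely the feature that separates the second problem from the first in \Eq{eq:binreal} and \Eq{eq:bincomplex}: uniqueness here does not require $A$ to be full-rank, which is why the statement can assert a globally unique minimizer unconditionally.

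Finally, minimizing a squared distance over a non-empty closed subspace is the classical orthogonal-projection problem, which has a unique solution by the projection theorem; equivalently, a strictly convex coercive functional attains its infimum at a single point on any non-empty closed convex set. Therefore the minimizer $X$ exists and is globally unique, namely the orthogonal projection of $Y$ onto $\mathcal{F}$. The only step requiring any care — and the closest thing to an obstacle — is the reformulation of the two subspace-containment conditions as the matrix equations $XV = 0$ and $W^* X = 0$, establishing that \emph{the feasible region is genuinely a linear subspace}; once that is in hand, existence and uniqueness are immediate from the projection theorem and require no further computation.
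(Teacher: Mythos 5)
Your proof is correct and follows essentially the same route as the paper's own (very terse) argument: the paper likewise observes that the sparsity and null-space constraints are linear and homogeneous and that $J_{null}$ is strictly convex, from which existence and uniqueness follow. Your additional details---rewriting the containments as $XV=0$ and $W^*X=0$ and invoking the projection theorem---simply make explicit what the paper leaves implicit.
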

\begin{proof}
The proof follows from the facts that the equality constraints related to
sparsity and null-spaces are linear and homogeneous and that the quadratic form is strictly convex.
\end{proof}

\section{Effect of binning on preserving structure}

As we have stressed, one of our goals is preserving common input matrix
structures, if it has any.  For example, being Hermitian, skew-Hermitian,
etc.  We had shown in the first paper~\cite{cite:ssa_1} that the {\em
exact} optimization problem preserves many such structures.  Here we intend to
show that we have not completely destroyed this property by binning and by
approximating the original problems by splitting into two separate
problems.

When binning is performed, our numerical evidence shows that the output sparse matrices automatically belong
to certain subspaces if the input matrix belongs to them.  In particular,
this property holds for Hermitian, complex-symmetric, 
circulant, centrosymmetric, and persymmetric matrices and also for each of
the skew counterparts except skew-circulant.
This holds for the intermediate output
matrix ($Y$, result of the first problem) and for the final output matrix
($X$, result of the second problem).  This is weaker than the result
when binning is not performed (see ~\cite[Section 5]{cite:ssa_1} because %REF
Hamiltonian, skew-Hamiltonian,
and skew-circulant matrices are absent.  We believe that a slightly modified
binning algorithm, where binning constraints like $Y_{ij} = - Y_{kl}$ are
used (compare~\Eq{eq:binreal}) might lead to a stronger result where these
three structures are also preserved.  However, this is just a conjecture as of now.

We prove below the result that binning based minimization
preserves the Hermitian, skew-Hermitian, and complex-symmetric subspaces.
We focus on these three only because these are common subspaces in
Finite Element practice.

As defined in the previous paper, let $\h{Y} = Y$ or $Y^*$ or $Y^T$,
where $op$ is a placeholder and means operation.  Define a multiple-choice
operation
$g$,
$$
g(Y) = \alpha \h{Y}
$$
where $\alpha$ is 1 or $-1$.
The conjugate transpose is important for complex matrices only.

\subsection{Structure modification when binning only}

We now show that in the first optimization problem, when binning is
performed, the output matrix is transformed in a predictable fashion
if the input matrix is transformed in a certain way.  First we need
to show how the bin identifiers transform.

\begin{lemma}
\label{lem:bin_invariance}
Let $A$ be a real matrix.
If $g(Y) = \alpha Y$, $(\bin{A})_{ij} = (\bin{A})_{kl}$ implies
$(\bin{g(A)})_{ij} = (\bin{g(A)})_{kl}$.
If $g(Y) = \alpha Y^T$, $(\bin{A})_{ij} = (\bin{A})_{kl}$ implies
$(\bin{g(A)})_{ji} = (\bin{g(A)})_{lk}$.
\end{lemma}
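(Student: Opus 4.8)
The claim is about how binning interacts with the operations $g(Y)=\alpha\h{Y}$. Since $g$ is parameterized by a sign $\alpha\in\{1,-1\}$ and an operation $\h{\cdot}\in\{Y,Y^*,Y^T\}$, and since $A$ is real (so $Y^*=Y^T$), there are really two cases: $g$ acts without transposing ($g(A)=\alpha A$) or with transposing ($g(A)=\alpha A^T$). The lemma says bin-equality at positions $(i,j),(k,l)$ in $\bin{A}$ is inherited by $\bin{g(A)}$ at the same positions in the non-transposing case, and at the swapped positions $(j,i),(l,k)$ in the transposing case.

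Let me think about this. The binning algorithm assigns bin identifiers based purely on the *values* of the entries of $A$ (restricted to the sparsity pattern), by dividing the range into equal bins. So two entries get the same bin id exactly when their values fall into the same sub-interval of the range. The key properties I'll lean on are P-2 ($\bin{\alpha A}\sim\bin{A}$) and P-3 ($\bin{A^T}\sim(\bin{A})^T$), together with the definition of equivalence $\sim$.

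Let me work through the two cases.

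**Case $g(Y)=\alpha Y$.** Here $g(A)=\alpha A$. By P-2, $\bin{\alpha A}\sim\bin{A}$. Unwinding the definition of $\sim$: $(\binsym^1)_{ij}=(\binsym^1)_{kl}$ whenever $(\binsym^2)_{ij}=(\binsym^2)_{kl}$, where $\binsym^1=\bin{\alpha A}=\bin{g(A)}$ and $\binsym^2=\bin{A}$. So $(\bin{A})_{ij}=(\bin{A})_{kl}$ implies $(\bin{g(A)})_{ij}=(\bin{g(A)})_{kl}$. Done — this is exactly the first assertion.

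**Case $g(Y)=\alpha Y^T$.** Here $g(A)=\alpha A^T$. I want to relate $\bin{\alpha A^T}$ to $\bin{A}$. Apply P-2 first: $\bin{\alpha A^T}\sim\bin{A^T}$. Then apply P-3: $\bin{A^T}\sim(\bin{A})^T$. Equivalence should be transitive (same-size matrices, and the "same bin implies same bin" relation composes), so $\bin{g(A)}=\bin{\alpha A^T}\sim(\bin{A})^T$. Now unwind $\sim$ at positions $(j,i)$ and $(l,k)$: I need $(\bin{g(A)})_{ji}=(\bin{g(A)})_{lk}$ whenever $((\bin{A})^T)_{ji}=((\bin{A})^T)_{lk}$, i.e.\ whenever $(\bin{A})_{ij}=(\bin{A})_{kl}$. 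That's precisely the hypothesis and the desired conclusion.

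**The plan, then, and the likely snag.** The whole argument reduces to (i) checking $\sim$ is transitive, (ii) invoking P-2 and P-3, and (iii) carefully unwinding the definition of $\sim$ with the right index positions. The only place I expect to stumble is the index bookkeeping in the transpose case — making sure I track that transposing $\bin{A}$ sends position $(i,j)$ to $(j,i)$, and that the equivalence relation's universally-quantified index pairs are applied at the transposed locations rather than the original ones. I'd write out one line explicitly, something like: since $\bin{g(A)}\sim(\bin{A})^T$, for all index 4-tuples, equal entries of $(\bin{A})^T$ force equal entries of $\bin{g(A)}$ at the same locations; setting those locations to $(j,i)$ and $(l,k)$ and using $((\bin{A})^T)_{ji}=(\bin{A})_{ij}$ gives the result. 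Transitivity of $\sim$ and the fact that the sign $\alpha$ drops out via P-2 (since $\alpha\neq 0$ and the real-matrix case collapses $Y^*$ into $Y^T$) are the minor routine verifications; they are what the authors mean when they say "the proofs are elementary."
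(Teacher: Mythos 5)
Your proof is correct, but it takes a different route from the paper's. The paper proves the lemma directly from the construction of the bin identifiers: an identifier at a location depends only on the entry value there together with globally shared data (the number of bins and the minimum/maximum of the positive and negative value ranges), and these transform predictably under sign flip and transposition, so equality of identifiers is preserved (up to relabeling and index swap). You instead derive the lemma formally from the previously stated properties P-\ref{prop:bin_homogeneous} ($\bin{\alpha A} \sim \bin{A}$) and P-3 ($\bin{A^T} \sim (\bin{A})^T$), using transitivity of $\sim$ and a careful unwinding of its definition; your index bookkeeping in the transpose case and your handling of the direction of $\sim$ (which, as defined, is one-sided: equality in $\binsym^2$ forces equality in $\binsym^1$, not conversely) are both right, and the chain $\bin{\alpha A^T} \sim \bin{A^T} \sim (\bin{A})^T$ delivers exactly the second claim. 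What each approach buys: yours is modular and makes precise a reduction the paper leaves implicit --- indeed the first claim of the lemma is literally P-\ref{prop:bin_homogeneous} specialized to $\alpha = \pm 1$ --- but it thereby pushes all the genuine content onto P-\ref{prop:bin_homogeneous} and P-3, which the paper asserts with proofs ``skipped'' as elementary; the paper's argument is self-contained relative to the binning formula and exposes the actual mechanism (dependence only on value plus global range data), though it is only an outline. Neither your proof nor the paper's addresses the one delicate point lurking beneath both, namely that negation reverses the orientation of the half-open bin intervals, so P-\ref{prop:bin_homogeneous} with $\alpha = -1$ needs the algorithm's separate treatment of positive and negative ranges (or a boundary convention) to hold exactly; since the paper takes P-\ref{prop:bin_homogeneous} as given, your proof is as complete as the paper's own.
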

\begin{proof}
We give an outline of the proof.  The statement follows from the formula of bin identifier.
The bin identifier corresponding to a matrix location depends
only on the matrix entry value at that location and other globally common values $-$
number of bins, the maximum and minimum values in the positive
and negative range of matrix values.  Hence equality of
bin identifiers before transformation leads to equality of bin identifiers after
transformation of the input matrix under sign flip and transpose.
\end{proof}
We skip the proof for the complex matrix $A$ since it is based
on similar ideas.

We come to the main result concerning the first optimization problem.
\begin{theorem}
\label{thm:bin_invariance}
If $X$ solves the binning based minimization problems
posed in~\Eq{eq:binreal} (real case) and~\Eq{eq:bincomplex} (complex case) for a
given input $A$, then $g(X)$ solves it for input $g(A)$.
\end{theorem}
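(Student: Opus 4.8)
The plan is to prove this equivariance from two ingredients: that the objective functional $J$ is invariant under $g$, and that the map $Y \mapsto g(Y)$ is a bijection between the feasible set of the problem for $A$ and the feasible set of the problem for $g(A)$. Once both hold, a global minimizer for $A$ must be carried to a global minimizer for $g(A)$, because $g$ is invertible: each choice of $\h{\cdot}$ is an involution and $\alpha^{-1} = \alpha$ for $\alpha \in \{1,-1\}$. I would state the real case (\Eq{eq:binreal}) and complex case (\Eq{eq:bincomplex}) in parallel throughout.

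First I would show $J(g(Y); g(A)) = J(Y; A)$ for every $Y$. This rests on three standard facts: the Frobenius norm is unchanged under transpose, conjugate transpose, and multiplication by a unit-modulus scalar; the pseudoinverse satisfies $(\alpha A)^\dagger = \alpha^{-1}\pinv{A}$, $(A^T)^\dagger = (\pinv{A})^T$, and $(A^*)^\dagger = (\pinv{A})^*$; and $J$ is symmetric in its two summands. When $\h{Y} = Y$, the factors of $\alpha$ and $\alpha^{-1}$ cancel directly inside each norm. When $g$ carries a transpose or conjugate transpose, applying $(MN)^T = N^T M^T$ (respectively $(MN)^* = N^* M^*$) shows that the two products $(Y-A)\pinv{A}$ and $\pinv{A}(Y-A)$ merely exchange roles under the norm, so their sum $J$ is unchanged.

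Next I would verify constraint equivariance, i.e. that $Y$ is feasible for the $A$-problem exactly when $g(Y)$ is feasible for the $g(A)$-problem. For the sparsity constraint, the pattern $\pat{g(A)}$ depends only on entry magnitudes, so it equals $\pat{A}$ when $g$ carries no transpose and $(\pat{A})^T$ otherwise; composing this with the index swap inside $g(Y)$ shows the zero pattern of $g(Y)$ matches $\pat{g(A)}$ iff that of $Y$ matches $\pat{A}$. For the binning constraint I would invoke properties P-2, P-3, and P-4 together with Lemma~\ref{lem:bin_invariance}: these yield $\bin{g(A)} \sim \bin{A}$ in the no-transpose case and $\bin{g(A)} \sim (\bin{A})^T$ in the transpose and conjugate-transpose cases. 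Since $g$ multiplies each entry by $\pm 1$ (and, under conjugation in the complex case, flips the sign of the imaginary part), every binning equality is preserved, because multiplying both sides of an equality by the same nonzero scalar is harmless; in the complex case the real and imaginary parts are handled separately, matching the structure of~\Eq{eq:bincomplex}.

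The main obstacle I anticipate is bookkeeping in the constraint step rather than any genuine difficulty: I must track the index transpositions carefully so that the constraints indexed by $\bin{g(A)}$ really do reindex onto the constraints indexed by $\bin{A}$, and I must confirm in the complex case that the sign flip on the imaginary parts under conjugation breaks no equality (it does not, as noted). With objective invariance and the feasible-set bijection both established, I would close with the standard argument: if $g(X)$ were not optimal for $g(A)$, some feasible $Z$ would satisfy $J(Z; g(A)) < J(g(X); g(A)) = J(X; A)$, whence $g^{-1}(Z)$ would be feasible for $A$ with strictly smaller objective value, contradicting the optimality of $X$.
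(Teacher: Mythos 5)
Your proposal is correct and takes essentially the same route as the paper's proof: invariance of the misfit under $g$ (which the paper cites from Lemma 5.2 of the first part), equivariance of the sparsity and binning constraints via Lemma~\ref{lem:bin_invariance} and the properties P-2 through P-4, and the standard pullback argument that a feasible-set bijection plus objective invariance carries minimizers to minimizers. The only difference is that you spell out inline the details the paper delegates to citations.
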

\begin{proof}
The proof is immediate based on the observations that the sparsity and binning
constraints as well as the misfit transform in a predictable fashion when the inputs are transformed under $g$.
Lemma 5.2 in~\cite{cite:ssa_1} states the proof for invariance of misfit. %REF
Lemma~\ref{lem:bin_invariance} gives the proof for predictable variation of binning
constraints.  The situation is similar to the one in proof of \cite[Theorem 5.5]{cite:ssa_1} %REF
and the result follows.
\end{proof}

\subsection{Structure modification when null-space is imposed}

We now show that in the second optimization problem, when null-space is
imposed, the output matrix is transformed in a predictable fashion
if the input matrix is transformed in a certain way.  First we need
to show that the misfit functional transforms predictably.

\begin{lemma}
\label{lem:J_invariance}
$J_{null}(X;Y) = J_{null}(g(X);g(Y))$, where $J_{null}(X;Y)$ is the misfit functional defined in
\Eq{eq:nullimpose}.
\end{lemma}
The proof is based on elementary linear algebra identities and we skip it.

We can now prove the main result concerning the second optimization problem.
\begin{theorem}
\label{thm:null_invariance}
If $X$ solves~\Eq{eq:nullimpose} for given inputs $A$ and $Y$, then $g(X)$ solves it
for inputs $g(A)$ and $g(Y)$.
\end{theorem}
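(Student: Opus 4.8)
The plan is to follow the same two-ingredient template used for \Thm{thm:bin_invariance} and for \cite[Theorem 5.5]{cite:ssa_1}: show that $g$ maps the feasible set of~\Eq{eq:nullimpose} onto the feasible set of the transformed problem, and then combine the misfit invariance already granted by Lemma~\ref{lem:J_invariance} with the uniqueness granted by Lemma~\ref{lemma:null_unique_exist} to pin down $g(X)$ as the unique minimizer. The observation that makes the whole argument close is that $g$ is an involution: since $\alpha \in \{1,-1\}$ and each of $Y$, $Y^T$, $Y^*$ is its own inverse operation up to the factor $\alpha^2 = 1$, we have $g(g(Y)) = Y$, so $g^{-1} = g$.

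First I would verify that $g(X)$ is feasible for the inputs $g(A)$ and $g(Y)$, i.e. that it satisfies the three constraints in~\Eq{eq:nullimpose} with $A$ replaced by $g(A)$. For the sparsity constraint, note that $\pat{g(A)}$ equals $\pat{A}$ when $\h{\cdot}$ is the identity and equals $(\pat{A})^T$ when $\h{\cdot}$ is transpose or conjugate transpose, since scaling by $\alpha \ne 0$ and complex conjugation leave the zero pattern untouched while transposition swaps indices. Matching this against $(g(X))_{ij}$, which is $\alpha X_{ij}$, $\alpha X_{ji}$, or $\alpha \overline{X_{ji}}$, shows the transformed sparsity constraint is exactly the original one after the same index relabeling, hence inherited from the feasibility of $X$. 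For the null-space constraints I would check the three choices of $\h{\cdot}$ separately: for the identity and conjugate-transpose cases the inclusions $\nullsp{g(A)} \subseteq \nullsp{g(X)}$ and $\nullsp{(g(A))^*} \subseteq \nullsp{(g(X))^*}$ are literally the original pair, possibly swapped, using $\nullsp{\alpha B} = \nullsp{B}$. In every case feasibility of $X$ for $(A,Y)$ is equivalent to feasibility of $g(X)$ for $(g(A),g(Y))$.

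With feasibility in hand I would argue optimality by contradiction. Suppose some feasible $X'$ for $(g(A),g(Y))$ had strictly smaller misfit, $J_{null}(X';g(Y)) < J_{null}(g(X);g(Y))$. Applying the involution $g$ to $X'$ produces, by the feasibility equivalence above, a feasible point for $(A,Y)$; and Lemma~\ref{lem:J_invariance} together with $g(g(Y)) = Y$ gives $J_{null}(g(X');Y) = J_{null}(X';g(Y))$, while the same lemma gives $J_{null}(g(X);g(Y)) = J_{null}(X;Y)$. Chaining these equalities turns the assumed strict inequality into $J_{null}(g(X');Y) < J_{null}(X;Y)$, contradicting optimality of $X$ for $(A,Y)$. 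Hence $g(X)$ is a minimizer for $(g(A),g(Y))$, and by the uniqueness in Lemma~\ref{lemma:null_unique_exist} it is the minimizer.

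I expect the transpose case of the null-space bookkeeping to be the only genuine obstacle, precisely because $\h{\cdot} = (\cdot)^T$, rather than $(\cdot)^*$, introduces a complex conjugate into the null-spaces; the point to get right is that $\nullsp{A^T} = \overline{\nullsp{A^*}}$ and $\nullsp{\overline{A}} = \overline{\nullsp{A}}$, so that each transformed inclusion is the complex conjugate of an original one and inclusion is preserved under conjugation. Everything else is routine relabeling, and since \Eq{eq:nullimpose} differs from the exact problem only in its still $g$-invariant Frobenius misfit, the argument is structurally identical to \cite[Theorem 5.5]{cite:ssa_1}.
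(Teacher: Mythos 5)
Your proposal is correct and follows essentially the same route as the paper's (very terse) proof, which simply invokes the template of Theorem~\ref{thm:bin_invariance} and Theorem 5.5 of~\cite{cite:ssa_1}: equivariance of the constraints under $g$, invariance of the misfit (Lemma~\ref{lem:J_invariance}), and the involution/uniqueness argument to identify $g(X)$ as the minimizer. You have in fact filled in details the paper leaves implicit, most notably the correct handling of the transpose case via $\nullsp{A^T} = \overline{\nullsp{A^*}}$, and you cite Lemma~\ref{lem:J_invariance} where the paper's proof (apparently by a slip) points to Lemma~\ref{lem:bin_invariance} instead.
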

\begin{proof}
The proof is similar to the proof of Theorem~\ref{thm:bin_invariance} above.
The relevant ingredients come from Lemma~\ref{lem:bin_invariance} and
Theorem 5.5 in~\cite{cite:ssa_1}. %REF
\end{proof}

\subsection{Preserving specific structures}

We can now state and prove the main result.  It shows that the current version of binning, as
describe in~\Sec{sec:binning}, preserves the most common matrix subspaces.

\begin{theorem}
If the input matrix $A$ for the problems in~\Eq{eq:binreal}, ~\Eq{eq:bincomplex},
and~\Eq{eq:nullimpose} belongs to one of the
following matrix subspaces -- Hermitian, complex-symmetric, skew-Hermitian, or
skew-complex-symmetric -- then the output matrix $X$ and intermediate
matrix $Y$ belong to the same subspace.
\end{theorem}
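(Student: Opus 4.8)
The plan is to realize each of the four named subspaces as the fixed-point set of a single operation $g(Y)=\alpha\,\h{Y}$ and then to feed the invariance Theorems~\ref{thm:bin_invariance} and~\ref{thm:null_invariance} into the uniqueness Lemmas~\ref{lemma:bin_unique} and~\ref{lemma:null_unique_exist}. The key observation is that $A$ is Hermitian, complex-symmetric, skew-Hermitian, or skew-complex-symmetric exactly when $g(A)=A$ for the respective choices $g(Y)=Y^*$, $g(Y)=Y^T$, $g(Y)=-Y^*$, and $g(Y)=-Y^T$; with the same $g$, membership of $Y$ or $X$ in the subspace is equivalent to $g(Y)=Y$ or $g(X)=X$. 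Thus the whole theorem reduces to showing that the two minimizers are fixed points of $g$.

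First I would handle the intermediate matrix $Y$. Fix the subspace, hence $g$, and assume $g(A)=A$. If $Y$ solves the first problem (\Eq{eq:binreal} or~\Eq{eq:bincomplex}) for input $A$, then Theorem~\ref{thm:bin_invariance} gives that $g(Y)$ solves it for input $g(A)=A$, that is, for the identical program (the sparsity pattern and bin pattern are $g$-invariant, which is already built into that theorem via Lemma~\ref{lem:bin_invariance}). Hence $Y$ and $g(Y)$ are both minimizers of one convex problem, and uniqueness of the minimizer (Lemma~\ref{lemma:bin_unique} for full-rank $A$, or the full-rank reduced-Hessian regime noted after it) forces $Y=g(Y)$, so $Y$ lies in the subspace.

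Next I would handle the output matrix $X$, now using $g(Y)=Y$ from the previous step. If $X$ solves~\Eq{eq:nullimpose} for inputs $A$ and $Y$, then Theorem~\ref{thm:null_invariance} gives that $g(X)$ solves it for inputs $g(A)=A$ and $g(Y)=Y$, again the same program. Because the second problem always has a globally unique minimizer (Lemma~\ref{lemma:null_unique_exist}), we get $X=g(X)$, so $X$ lies in the subspace. Chaining the two steps yields the theorem.

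The one delicate point, and the main obstacle, is uniqueness in the first problem: it can fail in the atypical rank-deficient situation flagged after Lemma~\ref{lemma:bin_unique}, and then $Y=g(Y)$ need not follow from two distinct minimizers. To cover that case I would symmetrize. The map $g$ is an involution, since $g(g(Y))=\alpha^2 Y=Y$ because $\alpha^2=1$ and both transpose and conjugate transpose are self-inverse; it sends the (homogeneous, hence linear) feasible set into itself and carries minimizers to minimizers. Therefore the average $\half\bigl(Y+g(Y)\bigr)$ is again a feasible minimizer and satisfies $g\bigl(\half(Y+g(Y))\bigr)=\half(Y+g(Y))$, so a minimizer lying in the subspace always exists. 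This shows the subspace constraint is consistent with binning even when uniqueness is lost, and together with the always-unique second step it secures the conclusion for $X$.
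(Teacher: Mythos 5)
Your proposal is correct and follows essentially the same route as the paper's own proof: identify each subspace as the fixed-point set of $g$, note $g(A)=A$, use the invariance results (Theorem~\ref{thm:bin_invariance} and Theorem~\ref{thm:null_invariance}) to conclude $g(Y)$ and $g(X)$ are also solutions, and invoke uniqueness to get $Y=g(Y)$ and $X=g(X)$. Your closing symmetrization argument (averaging $Y$ and $g(Y)$ to produce a $g$-invariant minimizer when uniqueness fails for rank-deficient $A$) is a genuine refinement the paper does not attempt --- the paper's proof simply assumes the solution is unique --- though, as you correctly flag, it only yields existence of a structure-preserving minimizer rather than structure of an arbitrary computed one.
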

\begin{proof}
The proof is similar to the one give for Theorem 5.5 in~\cite{cite:ssa_1}. %REF
The difference is that the operator that preserves the structure of
constraints and misfit is given by $g = g(Y) = Y$ or $Y^T$ or $Y^*$ or $-Y^T$ or $-Y^*$
and involves no permutation choices.

If $A$ is Hermitian, or complex-symmetric, or skew-Hermitian, or
skew-complex-symmetric, it implies that $g(A) = A$.  Since the intermediate
and output matrices for $g(A)$ would be $g(Y)$ and $g(X)$, and the solution
is unique, we have $Y = g(Y)$ and $X = g(X)$.
Thus $X$ and $Y$ preserve the relevant property satisfied by $A$.
\end{proof}

\section{Computational choices}

We now discuss in some detail the choices we have made in computing
the Moore-Penrose pseudoinverse of the input matrix, the null-spaces,
and solving the two optimization problems described earlier.

\subsection{Computing the pseudoinverse}

For an arbitrary dense input matrix, the Singular Value Decomposition (SVD)
is the most reliable algorithm to compute the pseudoinverse.  However, we
expect our input matrices to be not ill-conditioned.  Otherwise sufficient
sparsification and preservation of near null-space are contradictory
requirements.  Thus, we use a cheaper algorithm, like rank-revealing QR
factorization, for general matrices.  If the matrix has special properties,
for example, being Hermitian positive-definite, then pivoted Cholesky can be
used and it will be faster in general but we have not used it in our
results.

For $A \in \Cmn$, a rank-revealing QR factorization is
$$
A = Q R P^T,
$$
where $Q \in \Cmm$ is unitary, $R \in \Cmn$ is upper triangular, and $P \in
\Rnn$ is a permutation matrix.  Matrices $Q$ and $R$ can be partitioned as
follows.  Let $A$ have rank $r$.
$$
Q =
\begin{bmatrix}
 Q_{1} & Q_{2}
\end{bmatrix}
$$
$$
R = 
\begin{bmatrix}
 R_{11} & R_{12}  \\
      0 &     0
\end{bmatrix}
$$
Here $Q_1$ is $m \times r$, $Q_2$ is $m \times (m-r)$, $R_{11}$ is
$r \times r$, and $R_{12}$ is $r \times (n-r)$.  Note that numerically
the bottom right block of $R$ will not be exactly zero-valued in general.  We
can change it to zero, like we have above, by computing the numerical rank.

The pseudoinverse of $A$ can be expressed as
$$
\pinv{A} = P R^* Q^*.
$$
This requires the pseudoinverse of the upper triangular $R$, which can be
computed easily~\cite{cite:bjorck,cite:cjldp2}. The rank-revealing QR
factorization is implemented in LAPACK as {\tt xGEQP3} and uses level-3
BLAS functions~\cite{cite:geqp3}.

\subsection{Computing left and right null-space bases}

The null-space bases are a by-product of the algorithm used to compute the
pseudoinverse.  In case of the rank-revealing QR factorization, the right
$m-r$ columns of $Q^*$ form an orthonormal basis of the left null-space.
One could similarly compute the QR factorization for $A^*$ to obtain a
basis for the right null-space.  However, we can use the information
provided by the QR factorization of $A$ itself to compute the right
null-space and avoid a second factorization.

It is seen that
$$
A \left( P
\begin{bmatrix}
 -R_{11}^{-1} R_{12}\\
 I_{n-r}
\end{bmatrix} \right)
=
Q 
\begin{bmatrix}
 R_{11} & R_{12}  \\
      0 &     0
\end{bmatrix}
 P^T  P
\begin{bmatrix}
 -R_{11}^{-1} R_{12}\\
 I_{n-r}
\end{bmatrix} 
= 0.
$$
This means
$$
P
\begin{bmatrix}
 -R_{11}^{-1} R_{12}\\
 I_{n-r}
\end{bmatrix}
$$
is a basis for the right null-space.  Unlike the left null-space basis
computed above, it is not orthonormal.  However, its columns can be orthonormalized.
Since we expect the rank deficiency to be a small constant, this is quite inexpensive.

\subsection{Solving the binning based optimization problem}

The reduced Hessian of the binning based optimization (see~\Sec{sec:binning})
can be easily computed from the full Hessian because the equality
constraints are very simple.  The size of reduced Hessian is
equal to the actual number of bins, a number typically less than
1000, and it is generally not too sparse (see~\Sec{sec:numres}).  Hence, 
Cholesky factorization is the natural algorithm for solving
the reduced optimization problem.

\subsection{An iterative method to impose the null-space}

The second optimization problem (\Eq{eq:nullimpose}) is a linearly constrained
convex quadratic problem.  It needs to be solved only if
the input matrix has left or right null-spaces.

The quadratic term is as simple mathematically
as it can be.  Its second derivative is the identity operator between
matrix domain and range spaces.  We have some
flexibility in choosing the basis for null-space but the natural choice
that makes the problem reasonably well-conditioned is choosing an
orthonormal basis.  We expect the left and right nullities to be small
constants.  Thus, if we have a non-orthonormal basis, making it
orthonormal is a cheap operation and is beneficial in general.  Finally, to
solve the minimization problem, we use the Uzawa algorithm with conjugate
directions~\cite{cite:braess}.  Since the Hessian of quadratic form is
identity, a matrix need not be inverted in each iteration.  Because of such
well conditioning, one can achieve the machine epsilon relative accuracy in
very few iterations.

\section{TxSSA: A sparsification library}

We describe version 1.0 of TxSSA, which is a library implemented in C++ and
C with interfaces in C++, C, and MATLAB. It implements all the mathematical
algorithms described in this paper and the previous part~\cite{cite:ssa_1}.
TxSSA is an acronym for Tech-X Corporation Sparse Spectral Approximation.
The code is distributed under the BSD 3-clause open-source license and is
available here.
\begin{center}
\url{http://code.google.com/p/txssa/}
\end{center}
The library works on multiple operating systems and can be built wherever
CMake~\cite{cite:cmake} is available.  The {\em exact} but expensive
minimization problem described in the previous part is implemented only in
MATLAB and not available in the C++ and C library.  The reason for this is
that the exact problem is more of an experimental feature and not useful
for practical applications.  A couple of programs using the library are
provided in a {\tt examples} sub-directory of the library distribution.

We now describe the most important subset of the full available interface.
We do not discuss the functions related to complex types and ignore some
other functions as well.
The C++ and C library interface is made available in file {\tt txssa.h}
in the {\tt include} directory.  The interface consists of (almost) pure functions
and minimal number of C++ classes and C structs.  A `pure' function is similar to a mathematical function in that
it is deterministic and its output depends only on its input arguments and has no
side-effects like changing shared resources.  We use the term `almost' because error
logging does lead to side-effects in case of erroneous behavior as does
heap memory allocation.

\subsection{The C++ implementation and interface}

We use C++ templates based on four template parameters in the library
interface as well as implementation.  We use explicit template instantiation for
common types so it does not increase compilation time for library users.
\begin{itemize}
  \item {\tt index\_type:}  This is used for matrix sizes.  Typical values
  are {\tt int}, {\tt unsigned int}, {\tt long long}, etc.
  \item {\tt offset\_type:} This is used for offsets in compressed sparse
  row (CSR) matrix format. Typical values are {\tt int}, {\tt unsigned int},
  {\tt long long}, etc.  The only condition is that {\tt
  sizeof(offset\_type)} $\ge$ sizeof(index\_type).

  \item {\tt value\_type:}  This is used for floating point values for
  functions related to real matrices. Typical values are {\tt double} and
  {\tt float}.

  \item {\tt scalar\_type:}  This is used for floating point values for
  functions related to complex matrices. Typical values are {\tt double}
  and {\tt float}.
\end{itemize}
The C based interface can be used for concrete data types.

If the input matrix structure is known a priori or not known, it can be
specified using an {\tt enum}.  The names are self-explanatory.  Currently
the code does not fully utilize all the optimizations possible if
matrix type is known a priori.
The input matrix must be stored in column-oriented order.
\begin{verbatim}
    enum ssa_matrix_type
    {
        ssa_matrix_type_undefined = -1,
        ssa_matrix_type_general,
        ssa_matrix_type_hermitian_pos_def,
        ssa_matrix_type_hermitian_pos_semi_def,
        ssa_matrix_type_hermitian,
        ssa_matrix_type_skew_hermitian,
        ssa_matrix_type_complex_symmetric,
        ssa_matrix_type_num_types
    };
\end{verbatim}
One caveat is that the full input matrix has to be provided even if a matrix has
a special structure.  This is unlike the BLAS and LAPACK functions where,
for example, only one half of a symmetric matrix needs to be given.

Here are the arguments for {\tt ssa\_lpn}, a function to sparsify a real
matrix based on $L_p$ norm sparsity pattern and two step optimization
process.
\begin{verbatim}
int ssa_lpn(
    index_type            num_rows,           // >= 1
    index_type            num_cols,           // >= 1
    const value_type*     col_values,         // != 0
    index_type            col_leading_dim,    // >= num_rows
    value_type            sparsity_ratio,     // in [0,1]
    value_type            sparsity_norm_p,    // in [0,inf]
    offset_type           max_num_bins,       // >= 0
    bool                  impose_null_spaces, // true/false
    enum ssa_matrix_type  matrix_type,
    ssa_csr<index_type, offset_type, value_type>& out_matrix);
\end{verbatim}
Here {\tt ssa\_csr} is data type that has the following three members to
specify the output CSR data.
\begin{verbatim}
    offset_type* row_offsets;
    index_type*  column_ids;
    value_type*  values;
\end{verbatim}
The return value is zero on a successful completion.  The {\tt
col\_leading\_dim} argument is the leading dimension just like it is in
BLAS and LAPACK. The {\tt sparsity\_ratio} argument is the $q$ parameter in
the algorithm to compute sparsity pattern and {\tt sparsity\_norm\_p} is
the $p$ in $L_p$ norm.  The value for {\tt inf} is specified by
\begin{center}
{\tt std::numeric\_limits<value\_type>::infinity()}
\end{center}
and is provided by the C++ {\tt <limits>} header.  {\tt max\_num\_bins} is
the number bins.  If it is 0, then each unknown entry is given a separate
bin.  Using a value like 1000 is good enough to obtain the benefits of
binning.

\subsection{The MATLAB implementation and interface}
\label{sec:matlab}
The interface to our MATLAB implementation consists of the following four
functions.
\begin{verbatim}
    X = ssa_compute_exact(A, ratio, p)
    X = ssa_compute_exact_for_pat(A, A_pat)
    X = ssa_compute(A, ratio, p, max_num_bins,
                    impose_null_spaces)
    X = ssa_compute_for_pat(A, A_pat, max_num_bins,
                            impose_null_spaces)
\end{verbatim}
The output {\tt X} is a sparse matrix and the arguments have the following
meaning.
\begin{itemize}
  \item {\tt A} is an input real or complex matrix
  \item {\tt max\_num\_bins} is a non-negative integer (1000 is a reasonable choice)
  \item {\tt A\_pat} is a pattern matrix of same size as A with zeros and ones
  \item {\tt p} is a power p in $[0,\infty]$ (1 is good enough typically)
  \item {\tt ratio} is a sparsity ratio in $[0,1]$ (0 means more sparse, 1 means less)
  \item {\tt impose\_null\_spaces} an option to impose null-spaces (true or false)
\end{itemize}
The functions with {\tt exact} word in the name solve the {\em exact}
minimization problem.  The functions with {\tt for\_pat} words in the name
solve the minimization problem for a given pattern matrix rather than the
one computed from $L_p$ norm based algorithm.

\subsection{The C interface}
The C interface to the library is similar to the C++ interface. The
differences are due to lack of templates in C and the lack of automatic
destructor calls.  We show the functions for real double precision
matrices.  Hence the letter {\tt `d'} in the function names below.  For all
functions, we fix {\tt index\_type} and {\tt offset\_type} to be {\tt int}.

\begin{verbatim}
int ssa_d_lpn(
    int                  num_rows,           // >= 1
    int                  num_cols,           // >= 1
    const double*        col_values,         // != 0
    int                  col_leading_dim,    // >= num_rows
    double               sparsity_ratio,     // in [0,1]
    double               sparsity_norm_p,    // in [0,inf]
    int                  max_num_bins,       // >= 0
    int                  impose_null_spaces, // true/false
    enum ssa_matrix_type matrix_type,
    struct ssa_d_csr*    out_matrix);
\end{verbatim}
The return value is zero on a successful completion.  The data type {\tt
ssa\_d\_csr} is to store the sparse matrix.  Since its
destructor will not be called automatically, one needs to use the
following deallocation function after the matrix is created (and used).
\begin{verbatim}
void ssa_d_csr_deallocate(struct ssa_d_csr* out_matrix);
\end{verbatim}

\section{Numerical results}
\label{sec:numres}

We now present some binning based sparsification results for a fixed real asymmetric matrix
$A \in \reals^{40\times 40},$ where
\begin{equation}
\label{eq:A_sample}
A_{ij} = \cos(3^{\frac{1}{4}} i^{\frac{1}{2}} j)^5,
\end{equation}
and the indices start at 1.  This is the matrix that was used
in the first part~\cite{cite:ssa_1}.  Out objective here
is to show that binning leads to a useful sparsified matrix
$X$ such that $\pinv{A}X$ is well-conditioned.

To motivate the importance of binning, first we show the sparsity patterns of the Hessian in the
two optimization problems -- when no binning is performed
and when binning is performed.  See~\Fig{fig:bin_no_bin}.
These are for sparsity parameters $p = 1$ and $q = 0.8$.  The
un-binned problem has 597 unknowns and the binned problem has 321
unknowns.  The condition number of the un-binned Hessian is $1.15 \times 10^5$,
which is much higher than the condition number of $A$, which is 621.
Another issue is that the structure of the Hessian is not amenable
to sparse direct factorization.  This is because of high fill-in
even if standard fill-in reducing permutations are used. \Fig{fig:no_bin_L}
shows the lower triangular factor the un-binned Hessian computed in MATLAB.

\begin{figure}
\begin{center}
	\subfigure[Un-binned Hessian $597 \times 597$]
	{
  \includegraphics[scale=0.5]{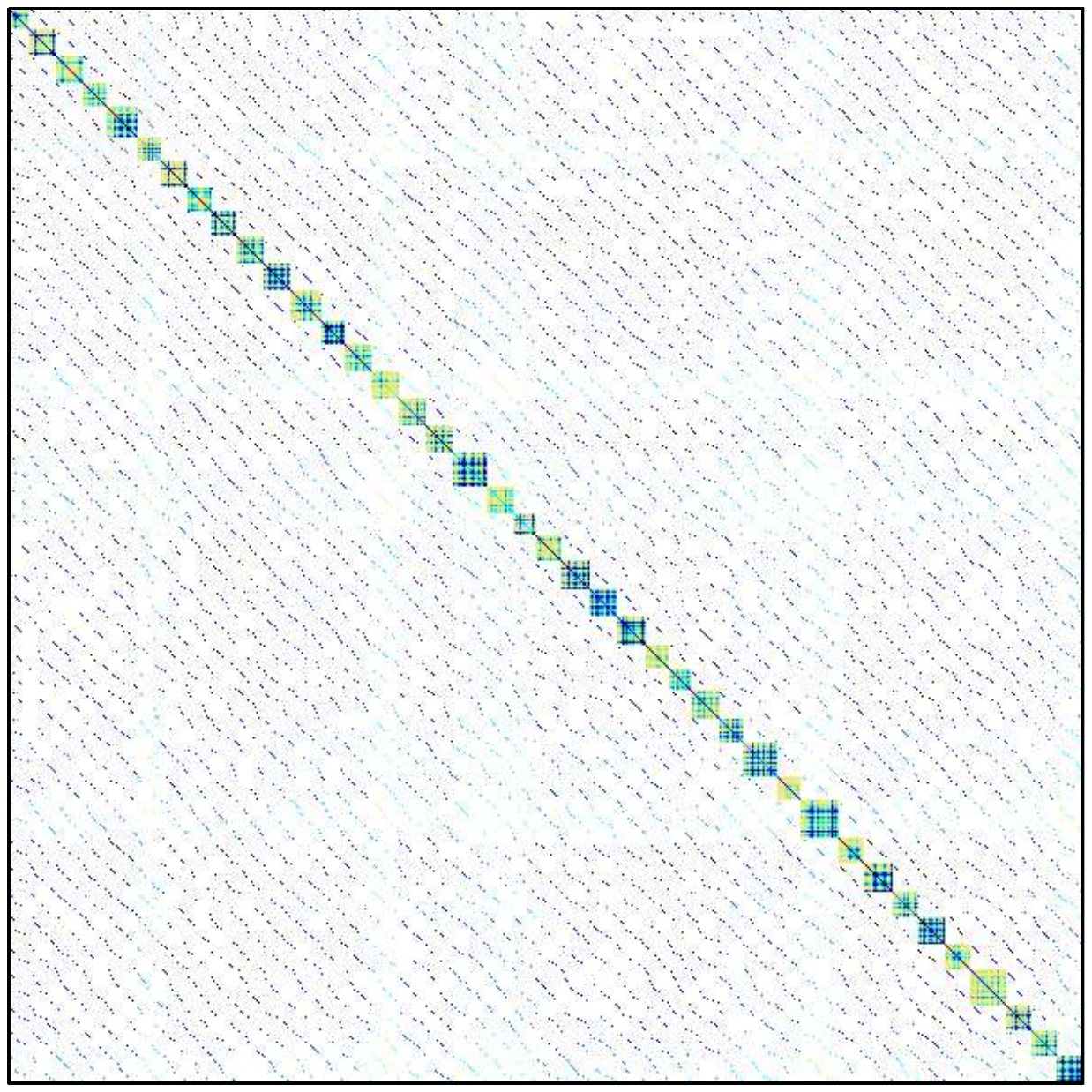}
	}
	\subfigure[Binned Hessian $321 \times 321$]
	{
	\includegraphics[scale=0.5]{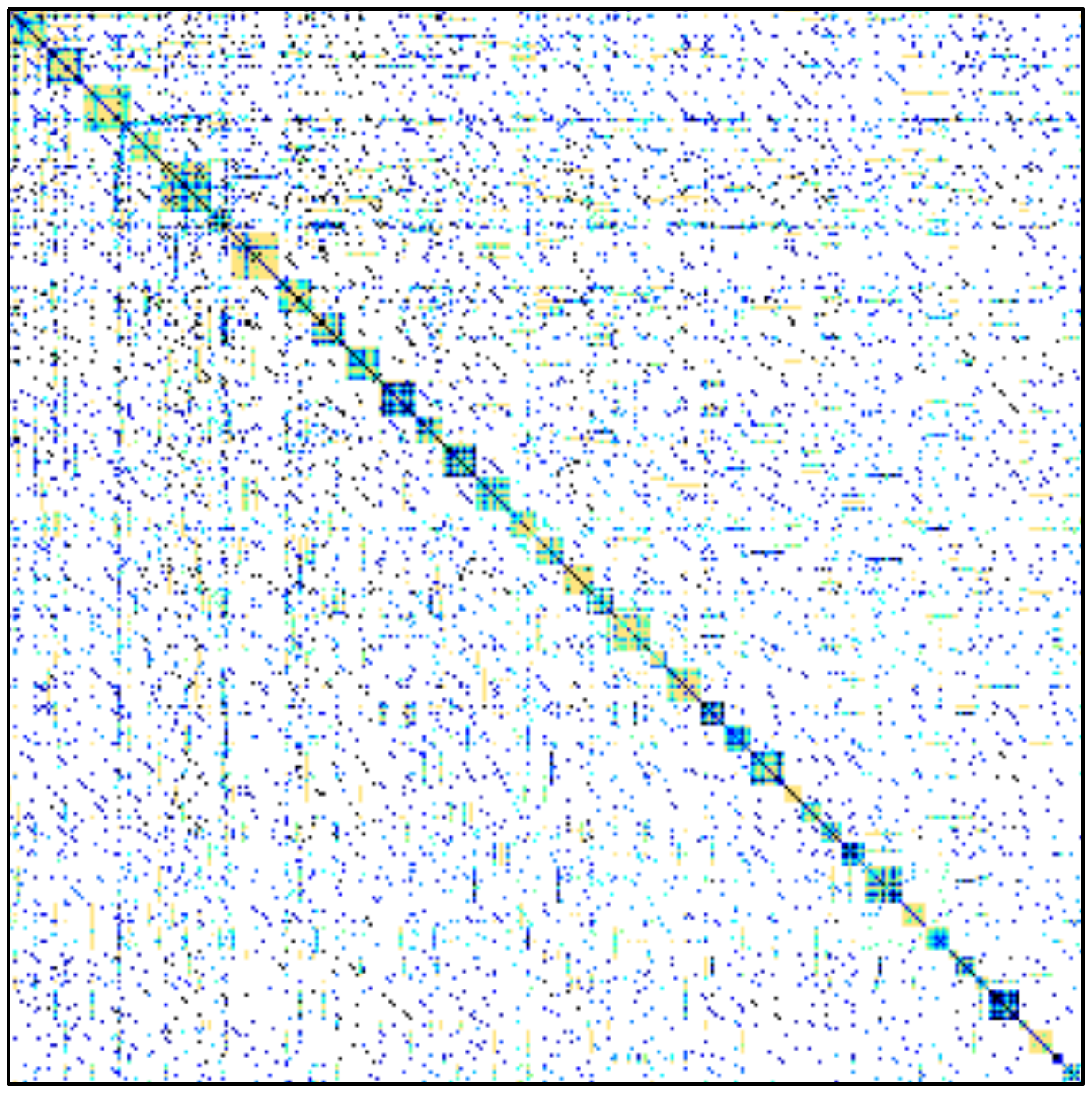}
	}
\caption{Two Hessians corresponding to the matrix specified in~\Eq{eq:A_sample}. Plots
generated using the {\tt cspy} program~\cite{Suitesparse} where darker pixel values correspond to larger magnitude.}
\label{fig:bin_no_bin}
\end{center}
\end{figure}

\begin{figure}
\centering
\includegraphics[scale=0.5]{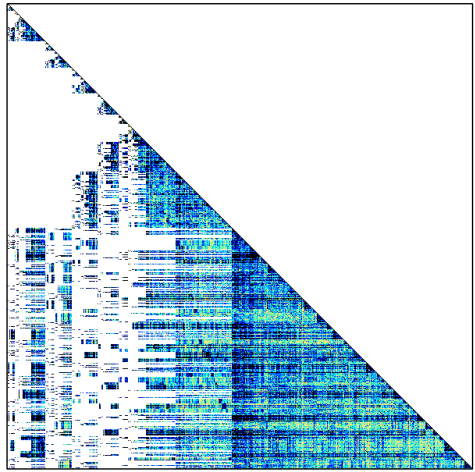}
\caption{The lower triangular factor of the Hessian matrix shown in \Fig{fig:bin_no_bin}(a) after the matrix
is reordered by MATLAB to reduce fill-in.  High fill-in mean sparse direct solvers will not be efficient.}
\label{fig:no_bin_L}
\end{figure} 

We now show that once $N_{bin}$, the number of bins, is
sufficiently large, the effects of binning on spectral properties of $X$
are little. \Fig{fig:nbin} shows the condition numbers of
$\pinv{A} X$ and relative Frobenius norm differences $\norm{\pinv{X} - \pinv{A}}_F / \norm{\pinv{A}}_F $
for various number of bins.  The lowest condition
number is approximately 8.37, which is very close to
the `exact' answer 4.73 obtained without binning in~\cite{cite:ssa_1}.

\begin{figure}
\begin{center}
	\subfigure[$\text{cond}(\pinv{A} X)$]
	{
    \includegraphics[scale=0.4]{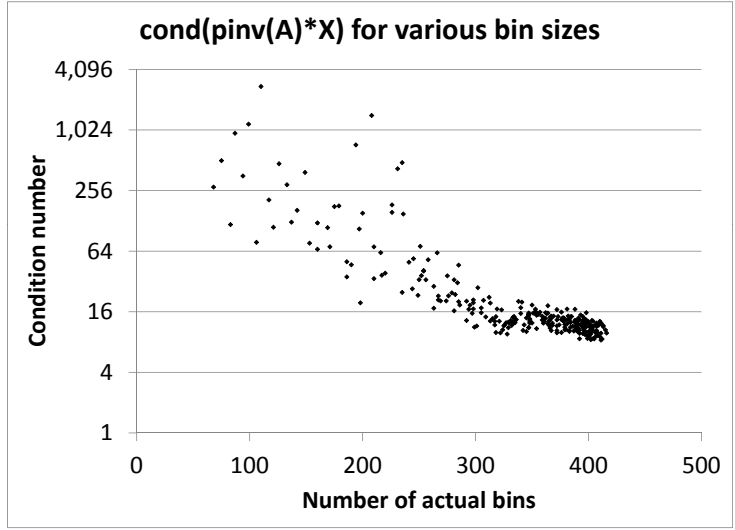}
	}
	\subfigure[$\norm{\pinv{X} - \pinv{A}}_F / \norm{\pinv{A}}_F $]
	{
    \includegraphics[scale=0.4]{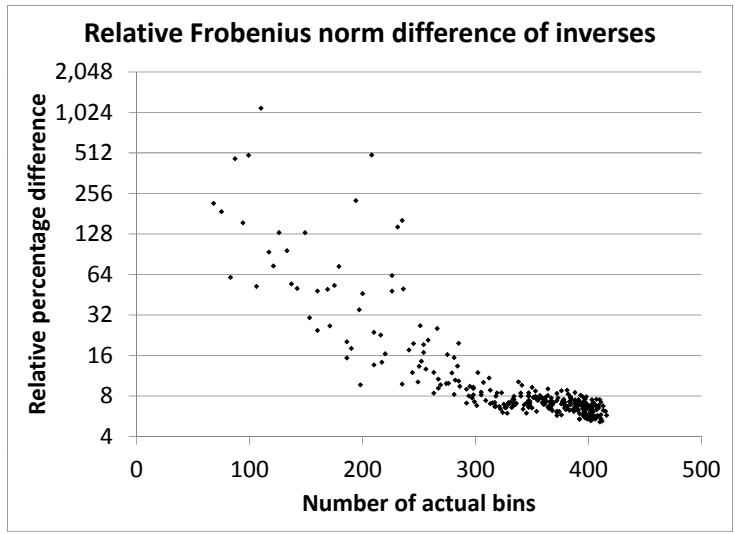}
	}
\caption{The figures shows that the spectral properties of of sparsified $X$ as well as the difference of inverses
are not too perturbed even if binning is used with sufficient number of bins.  The large differences
for low number of bins is due to a few small singular values that are not approximated well.}
\label{fig:nbin}
\end{center}
\end{figure}

\vspace{1cm}

\noindent \textbf{ \large{Acknowledgements}}

This work was partially supported by the US Department of Energy SBIR Grant
DE-FG02-08ER85154.  The author thanks Travis M. Austin, Marian Brezina,
Leszek Demkowicz, Ben Jamroz, Thomas A. Manteuffel, and John Ruge for many
discussions.

\bibliographystyle{elsarticle-num}
\bibliography{02_sparsification}

\end{document}